\newtheorem{theorem}{Theorem}[section]
\newtheorem{proposition}{Proposition}[section]
\theoremstyle{remark}
\theoremstyle{definition}
\numberwithin{equation}{section}
\newcommand{\subjclass}[2][2010]{%
  \let\@oldtitle\@title%
  \gdef\@title{\@oldtitle\footnotetext{#1 \emph{Mathematics Subject Classification.} #2}}%
}
\newcommand{\keywords}[1]{%
  \let\@@oldtitle\@title%
  \gdef\@title{\@@oldtitle\footnotetext{\emph{Keywords.} #1.}}%
}
\def\Z{\mathbb Z}
\def\R{\mathbf R} 
\def\C{\mathbb C} 
\def\T{\mathbb T} 
\def\bB{\mathcal B} 
\def\Csf{\mathsf c}
\def\sN{\mathsf N}
\def\d{\partial}
\newif\ifprint
\begin{document}

\title{A nonlinear instability result to the Navier-Stokes equations with Navier slip boundary conditions}
\author{Tien-Tai Nguyen \\ Email: \href{mailto: nttai.hus@vnu.edu.vn}{nttai.hus@vnu.edu.vn}} 
\affil{VNU University of Science,\\
 334 Nguyen Trai street, Hanoi, Vietnam}

\subjclass[2020]{35A15,  35P05, 76D05, 76E99}
\keywords{Navier-Stokes equations, Navier-slip boundary conditions, nonlinear instability}

\maketitle

\begin{abstract}
In this paper, we investigate the  instability  of the trivial steady states to the incompressible viscous fluid with Navier-slip boundary conditions. For the linear instability, the existence of infinitely many normal mode solutions to the linearized equations is shown via the operator method of Lafitte and Nguyen (2022). Hence, we prove the nonlinear instability by adapting the framework of Desjardins and Grenier (2003) studying  some classes of viscous boundary layers to obtain two separated solutions at escaping time. Our work performs a different approach from that of Ding, Li and Xin (2018).
\end{abstract}


\section{Introduction}

\subsection{Formulation of the problem}

Let $\T$ be the usual 1D torus and let us consider a  horizontal slab domain $\Omega = 2\pi L \T \times (-1,1)$, with the length of periodicity $L>0$. In this paper, we consider an incompressible fluid, governed by the Navier-Stokes equations
\begin{equation}\label{EqNS}
\begin{cases}
\partial_t u + u \cdot \nabla u + \nabla p - \mu \Delta u = 0, & \text{in } \Omega, \ t \ge 0, \\
\nabla \cdot u = 0, & \text{in } \Omega, \ t \ge 0,
\end{cases}
\end{equation}
where $u(x_1,x_2,t) = (u_1(x_1,x_2,t),u_2(x_1,x_2,t))$ and $p(x_1,x_2,t)$ are the velocity and pressure of the flow respectively. 

The Navier-Stokes equations are frequently studied with Dirichlet boundary condition   (see e.g. \cite{Tem84}), i.e. the fluid does not slip along the boundary, formulated by Stokes in 1845. There are also other types of boundary conditions in physical phenomena.  For instance, the Navier-slip boundary conditions is proposed by Navier \cite{Nav23}  since 1827,  that allows the fluid to slip and that will be considered in this paper. Another type of boundary conditions for \eqref{EqNS} is the diffusive-free boundary conditions, recently studied  by  Lin and Kerswell \cite{LK24} and Dormy and Gerard-Varet \cite{DGV25}. 

Let $\Sigma_\pm = 2\pi L\T \times \{x_2=\pm 1\}$,  the Navier-slip boundary conditions  are given on $\Sigma_\pm$ as follows 
\begin{equation}\label{Boundary}
\begin{split}
u \cdot n &= 0 \quad \text{on } \Sigma_+\cup \Sigma_-,\\
\big[ (-pI + \mu (\nabla u + \nabla^{T} u)) \cdot n \big] \cdot {\tau} &= \xi(x) u \cdot {\tau}, \quad   \quad \text{on } \Sigma_+ \cup \Sigma_-,\\
\end{split}
\end{equation}
where the superscript $T$ means matrix transposition, $I$ is the $2 \times 2$ identity matrix,  $n$ is the outward unit normal vector and ${\tau}$ is the corresponding tangent vector of the boundary. In \eqref{Boundary}, for simplicity, let  $\xi(x)$ be a scalar function describing the slip effect on the boundary, only taking  constant values $\xi_\pm$ on $\Sigma_\pm$, respectively.  We refer to other papers \cite{Kel06, GK12, KT24}, where the authors consider variable $\xi$.

One of the first study on the well-posedness of Navier–Stokes equations with Navier boundary conditions was due to Solonnikov and Ščadilov \cite{SS73} for the stationary equations,  see also some papers of Amrouche and his collaborators \cite{AS11, AR14, A++21}. For analytical study on the time--dependent Navier–Stokes equations in a bounded domain or on the half-space, we refer to Clopeau et al. \cite{CMR98}, or to da Veiga \cite{dV05}, showing the existence of the regular solution. Let us mention also other problems related to the incompressible Navier–Stokes equations with Navier-slip  boundary conditions, the control problem studied by Coron \cite{Cor96} and the inviscid limit problem, studied by  Iftimie and Sueur \cite{IS11}.  For  numerical study, we refer to \cite{APV98, Ban01, BJ67, JM00, JM01, Joh02, QWS03, Ser59}.

\subsection{The goal of this paper}

Our main interest here is to study the nonlinear  instability of the steady state solution $(0, p_s)$ ($p_s$ is a constant) to this boundary value problem \eqref{EqNS}-\eqref{Boundary}, revisiting the previous result of Ding, Li and Xin \cite{DLX18} in a slab domain.   In that paper, the authors obtain  a threshold of viscosity  depending on slip length, that separates the regimes of  nonlinear stability and nonlinear instability. We also refer to another paper \cite{LPZ22} solving the case of bounded domain.  In this paper, we aim at showing the nonlinear instability of the state $(0, p_s)$ in the subcritical regime  $\mu<\mu_c$ by an alternative method, inspired by Lafitte and the author \cite{LN20} and by  Desjardins and Grenier \cite{DG03}

To prove the linear instability, we study the spectral analysis following normal mode ansatz of Chandrasekhar \cite{Cha61}. Precisely, for any horizontal spatial frequency $k\in L^{-1}\Z\setminus\{0\}$, we define the $k$-subcritical regime of the viscosity coefficient $\mu < \mu_c(k,\Xi)$ (see $\mu_c(k,\Xi)$ in Proposition \ref{PropMuC}). Thus we prove that there exists an infinite sequence of normal mode solutions to the linearized equations thanks to the operator method of Lafitte and the author.  This is stated in Theorem \ref{ThmSpectral}.

The second part is to prove the nonlinear instability in the subcritical regime 
\begin{equation}\label{mu-c}
\mu< \mu_c(\Xi)=  \sup_{k\in L^{-1}\Z\setminus\{0\}} \mu_c(k,\Xi). 
\end{equation}
Following Desjardins and Grenier \cite{DG03}, where they studied the nonlinear instability of some classes of boundary layers to the incompressible rotating fluid, we prove the nonlinear instability in the sense of \cite[Theorem 2.3]{DG03}.  Thanks to the existence of infinitely many normal mode solutions to the linearized equations \eqref{EqLinearized}, we intend to construct two separated solutions at the escaping time. The statement will be shown in Theorem \ref{ThmNonlinear}.

\section{Main results}

Denote the perturbation by 
\[
u=u-0, \quad q=\pi-\pi_s.
\]
Hence, $(u,q)$ satisfies the nonlinear equations
\begin{equation}\label{EqMain}
\begin{cases}
\d_t u+ u\cdot\nabla u +\nabla q -\mu\Delta u =0,\\
\text{div}u=0.
\end{cases}
\end{equation}
In our setting, $n = (0,1)$ on $\Sigma_+$ and $n = (0,-1)$ on $\Sigma_-$, while $\tau = (1,0)$ on both $\Sigma_\pm$.  Hence, the boundary conditions \eqref{Boundary} in the slab domain can be written under the form
\begin{equation}\label{NavierBound}
\begin{cases}
u_2 =0 &\quad\text{on } \Sigma_+ \cup \Sigma_-,\\
\mu \d_2 u_1= \xi_+ u_1 &\quad\text{on }\Sigma_+,\\
\mu \d_2 u_1=- \xi_- u_1 &\quad\text{on }\Sigma_-.
\end{cases}
\end{equation}
From now on, we move to study the instability of trivial state to \eqref{EqMain}-\eqref{NavierBound}.

Linearizing \eqref{EqMain}--\eqref{NavierBound} around the trivival steady state yields the linearized equations
\begin{equation}\label{EqLinearized}
\begin{cases}
\d_t u +\nabla q -\mu\Delta u=0 &\quad\text{in }\Omega,\\
\text{div}u=0 &\quad\text{in }\Omega,\\
u_2 =0 &\quad\text{on } \Sigma_+ \cup \Sigma_-,\\
\mu \d_2 u_1= \xi_+ u_1 &\quad\text{on }\Sigma_+,\\
\mu\d_2 u_1=- \xi_- u_1 &\quad\text{on }\Sigma_-.
\end{cases}
\end{equation}
We will solve problem \eqref{EqLinearized} by the standard normal mode analysis, see \cite{Cha61}. That means, we look for $u$ and $q$ of the form
\begin{equation}\label{NormalMode}
\begin{cases}
u_1(t,x_1,x_2)= e^{\lambda t}\sin (kx_1)\psi(x_2),\\
u_2(t,x_1,x_2)= e^{\lambda t} \cos(kx_1) \phi(x_2),\\
q(t,x_1,x_2)= e^{\lambda t} \cos(kx_1)\pi(x_2),
\end{cases}
\end{equation}
where $k\in L^{-1} \Z \setminus \{0\}$ is called the wavenumber and $\lambda=\lambda(k) \in \C$ is called the characteristic values of the linearized equations (after \cite{Cha61}). Since we are interested in the linear instability in this section, we look for $\lambda$ with positive real part. Substituting \eqref{NormalMode} into \eqref{EqLinearized}, we obtain the following system of ODEs
\begin{equation}\label{SystemMode}
\begin{cases}
\lambda\psi - k\pi - \mu (k^2 \psi-\psi'')=0,\\
\lambda\phi +\pi' -\mu (k^2\phi-\phi'')=0,\\
k\psi+\phi'=0
\end{cases}
\end{equation}
with boundary conditions
\begin{equation}\label{BoundMode}
\phi(\pm 1)=0, \quad \mu \psi'(1)=\xi_+\psi(1), \quad \mu\psi'(-1)=-\xi_- \psi(-1).
\end{equation}

Eliminating $\pi$ from $\eqref{SystemMode}_1$ and $\psi$ from $\eqref{SystemMode}_3$ gives us a fourth order ODE for $\phi$, that is
\begin{equation}\label{4thOrderEqPhi}
\lambda(k^2\phi-\phi'')+ \mu(\phi^{(4)}-2k^2\phi''+k^4\phi)=0,
\end{equation}
with the boundary conditions
\begin{equation}\label{BoundOde}
\phi(\pm 1)=0, \quad \mu \phi''(1)=\xi_+ \phi'(1), \quad \mu \phi''(-1)=-\xi_- \phi'(-1).
\end{equation}
Hence,  solving the system \eqref{SystemMode} with boundary conditions \eqref{BoundMode} is reduced  to study the ODE \eqref{4thOrderEqPhi} with boundary conditions \eqref{BoundOde}. 

Let $k$ be fixed, we aim at solving a solution $\phi\in H^4((-1,1))$ to \eqref{4thOrderEqPhi}--\eqref{BoundOde} with positive $\lambda$.
In this paper, we utilize an operator approach initiated by Lafitte and the author \cite{LN20} to prove the existence of \textit{infinitely many} characteristic values $\lambda$ to the linearized equations \eqref{EqLinearized}. The line of investigation is similar to that one of viscous Rayleigh-Taylor instability to incompressible fluid with Navier-slip boundary conditions \cite{Tai22}. That means, we will place ourselves in the $k$-subcritical regime of the viscosity coefficient  $\mu< \mu_c(k,\Xi)$ with $\Xi=(\xi_-,\xi_+)$

To state the linear results, we recall the properties of  $\mu_c(k,\Xi)$ and $\mu_c(\Xi)$ (see \eqref{mu-c}) from \cite[Proposition 3.1]{Tai22}.
\begin{proposition}\label{PropMuC}
Let $\tilde H^s((-1,1))= \{\phi \in H^s((-1,1)), \phi(\pm 1)=0\}$ with $s\geq1$. The following results hold.
\begin{enumerate}
\item For any $k>0$, we have 
\begin{equation}\label{MuC_kMax}
\begin{split}
\mu_c(k,\Xi) &= \max\limits_{\phi\in \tilde H^2((-1,1))} \frac{\xi_-(\phi'(-1))^2+\xi_+(\phi'(1))^2}{\int_{-1}^1 ((\phi'')^2+2k^2(\phi')^2+k^4\phi^2) dx_2}\\
&= \frac1{4k \sinh^2(2k) } 
\left( \begin{split} 
&( \sinh(2k)\cosh(2k) -2k)(\xi_+ +\xi_-) \\
&+ \left(\begin{split} 
&( \sinh(2k) -2k\cosh(2k))^2 (\xi_++ \xi_-)^2  \\
& + \sinh^2(2k)(\sinh^2(2k)-4k^2) (\xi_+ -\xi_-)^2
\end{split}\right)^{\frac12}
\end{split}\right).
\end{split}
\end{equation}

\item $\mu_c(k,\Xi)$ is a decreasing function in $k\in \R_+$ and satisfies
\begin{equation}\label{Mu_Climit}
\lim_{k\to +\infty} \mu_c(k,\Xi)= 0, \quad \lim_{k\to 0} \mu_c(k,\Xi) = \sup_{k>0}\mu_c(k,\Xi) = : \mu_c(\Xi).
\end{equation}

\item We have
 \begin{equation}\label{EqMuC}
\begin{split}
\mu_c(\Xi) &=\max\limits_{\phi\in \tilde H^2((-1,1))} \frac{\xi_-(\phi'(-1))^2+\xi_+(\phi'(1))^2}{\int_{-1}^1 (\phi'')^2 dx_2}=  \frac13 \Big(\xi_++ \xi_-+ \sqrt{\xi_+^2-\xi_+\xi_- +\xi_-^2}\Big).
\end{split}
\end{equation}
\end{enumerate}
\end{proposition}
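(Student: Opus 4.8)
The plan is to establish Proposition~\ref{PropMuC} as a variational characterization of the critical viscosity together with an explicit computation of the extremal value. The central idea is that $\mu_c(k,\Xi)$ is precisely the largest viscosity for which the quadratic form

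\[
Q_k(\phi) = \mu \int_{-1}^1 \big((\phi'')^2 + 2k^2(\phi')^2 + k^4\phi^2\big)\,dx_2 - \xi_-(\phi'(-1))^2 - \xi_+(\phi'(1))^2
\]

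is nonnegative on $\tilde H^2((-1,1))$; this comes from testing the steady ($\lambda=0$) version of \eqref{4thOrderEqPhi} against $\phi$ and integrating by parts, where the boundary terms \eqref{BoundOde} produce exactly $\xi_\pm(\phi'(\pm1))^2$. First I would set up this Rayleigh-quotient formulation, so that part~(1) reduces to solving the Euler--Lagrange problem for the maximizer. The quotient is scale-invariant, its numerator depends only on boundary data, and the denominator is coercive on $\tilde H^2$, so a maximizer exists; its Euler--Lagrange equation is the homogeneous fourth-order ODE $\phi^{(4)} - 2k^2\phi'' + k^4\phi = 0$ in the interior, whose general solution is spanned by $\cosh(kx_2),\ \sinh(kx_2),\ x_2\cosh(kx_2),\ x_2\sinh(kx_2)$.

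The key computational step, which I expect to be the main obstacle, is carrying out the explicit optimization. After imposing $\phi(\pm1)=0$ the solution space is two-dimensional, and the maximization over the remaining two parameters reduces to an eigenvalue problem for a $2\times2$ symmetric matrix whose entries are built from $\sinh(2k)$, $\cosh(2k)$, and $k$; the value $\mu_c(k,\Xi)$ is its largest eigenvalue, which is why the closed form in \eqref{MuC_kMax} carries the square-root of a discriminant combining $(\xi_++\xi_-)^2$ and $(\xi_+-\xi_-)^2$. The bookkeeping in assembling the matrix entries from the integrals $\int_{-1}^1(\phi'')^2$, $\int_{-1}^1(\phi')^2$, $\int_{-1}^1\phi^2$ against the basis above, and in simplifying the resulting Gram and boundary matrices using hyperbolic identities, is where the real effort lies. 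I would organize this by exploiting the even/odd symmetry of the domain $(-1,1)$ to partially diagonalize, separating symmetric and antisymmetric modes, which explains the appearance of $(\xi_++\xi_-)$ in the symmetric sector and $(\xi_+-\xi_-)$ in the antisymmetric one.

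For part~(2), monotonicity in $k$ follows from differentiating the Rayleigh quotient, or more cleanly from observing that increasing $k$ strictly increases the denominator $\int((\phi'')^2+2k^2(\phi')^2+k^4\phi^2)$ for any fixed nonconstant $\phi$ while leaving the boundary numerator unchanged, so each quotient value decreases and hence the supremum does too; the limits in \eqref{Mu_Climit} are read off from the explicit expression \eqref{MuC_kMax} by expanding the hyperbolic functions as $k\to0$ and noting exponential dominance as $k\to\infty$. Finally, part~(3) is the $k\to0$ limit: sending $k\to0$ in the Rayleigh quotient formally drops the $2k^2(\phi')^2+k^4\phi^2$ terms, leaving the simpler quotient with denominator $\int_{-1}^1(\phi'')^2$, and the same $2\times2$ optimization now has polynomial (rather than hyperbolic) entries, yielding the clean algebraic formula $\tfrac13(\xi_++\xi_-+\sqrt{\xi_+^2-\xi_+\xi_-+\xi_-^2})$. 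The one subtlety to check here is that the $k\to0$ limit of the maximizers is admissible for the limiting functional and that the supremum is genuinely attained in the limit, which one confirms either by the explicit formula or by a standard $\Gamma$-convergence-type argument for the quotients.
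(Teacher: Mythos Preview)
The paper does not actually prove Proposition~\ref{PropMuC}; it is quoted verbatim from \cite[Proposition~3.1]{Tai22} (the author's earlier paper on the Rayleigh--Taylor problem with Navier-slip conditions), so there is no in-paper proof to compare against. Your sketch is essentially the argument one would expect to find in \cite{Tai22}: the Euler--Lagrange equation of the Rayleigh quotient is indeed $(\partial_{x_2}^2-k^2)^2\phi=0$ on $(-1,1)$, the Dirichlet constraint $\phi(\pm1)=0$ cuts the four-dimensional solution space to two, and the remaining optimization is a $2\times2$ generalized eigenvalue problem whose largest eigenvalue produces the square-root formula in \eqref{MuC_kMax}. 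Your use of the even/odd splitting to explain why $(\xi_++\xi_-)$ populates the diagonal and $(\xi_+-\xi_-)$ the off-diagonal of that matrix is exactly the right organizing principle, and your monotonicity argument for part~(2) (the numerator is $k$-independent while the denominator is strictly increasing in $k$ for every fixed admissible $\phi$) is cleaner than differentiating the closed form. The asymptotics and the $k\to0$ limit match the stated formulas after Taylor expansion, so the proposal is sound as written.
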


Hence, in the subscritical regime $\mu<\mu_c(\Xi)$, we show the following theorems. 
\begin{theorem}\label{ThmSpectral}
 Let $k$ be fixed and $\mu \in (0,\mu_c(k,\Xi))$. There exists an infinitely sequence $(\lambda_n,\phi_n)_{n\geq 1}$ with $\lambda_n(k)>0$ increasing towards infinity as $n\to \infty$ and nontrivial function $\phi_n\in H^4((-1,1))$ satisfying \eqref{4thOrderEqPhi}--\eqref{BoundOde} as $\lambda=\lambda_n$. 
\end{theorem}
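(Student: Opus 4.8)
The plan is to recast \eqref{4thOrderEqPhi}--\eqref{BoundOde} as the spectral problem of a self-adjoint operator with compact resolvent, following the operator method of \cite{LN20} and its Navier-slip adaptation \cite{Tai22}. Writing $A := k^2 - \partial_{x_2}^2$ one has $A^2 = \partial_{x_2}^4 - 2k^2\partial_{x_2}^2 + k^4$, so that \eqref{4thOrderEqPhi} reads $\lambda A\phi + \mu A^2\phi = 0$, the slip relations \eqref{BoundOde} being built into the domain of $A^2$. With $\tilde H^1,\tilde H^2$ as in Proposition \ref{PropMuC}, introduce the symmetric forms
\[
a(\phi,\psi) = \int_{-1}^1 (k^2\phi\psi + \phi'\psi')\,dx_2, \qquad \ell(\phi,\psi) = \xi_+\phi'(1)\psi'(1) + \xi_-\phi'(-1)\psi'(-1),
\]
\[
J(\phi,\psi) = \int_{-1}^1 (\phi''\psi'' + 2k^2\phi'\psi' + k^4\phi\psi)\,dx_2.
\]
Pairing \eqref{4thOrderEqPhi} with $\phi$, integrating by parts twice and using \eqref{BoundOde} to convert the boundary contributions into $\ell(\phi,\phi)$, I obtain the energy identity $\lambda\,a(\phi,\phi) = \ell(\phi,\phi) - \mu J(\phi,\phi)$. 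In particular each characteristic value is real and equals the Rayleigh quotient $R(\phi) := (\ell(\phi,\phi) - \mu J(\phi,\phi))/a(\phi,\phi)$.

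The operator-theoretic heart of the argument is to view $R$ as a quotient on the Hilbert space $H := \tilde H^1$ equipped with the inner product $a$, which is equivalent to the $H^1$-inner product since $\phi(\pm 1)=0$. The numerator form $\phi \mapsto \ell(\phi,\phi) - \mu J(\phi,\phi)$ is symmetric and closed with form domain $\tilde H^2$; the slip form $\ell$ depends on $\phi$ only through the two traces $\phi'(\pm 1)$, hence is a finite-rank (in particular relatively compact) perturbation, while $J$ is coercive on $\tilde H^2$. Because $\tilde H^2 \hookrightarrow H$ is compact, the associated self-adjoint operator $\mathcal B_\mu$ has compact resolvent. The spectral theorem then yields an $a$-orthonormal basis of eigenfunctions $(\phi_n)$ and a discrete family of real characteristic values $(\lambda_n)$ with $|\lambda_n| \to \infty$, each given by a Courant--Fischer min--max of $R$; this is the desired infinite sequence of normal modes. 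Elliptic bootstrap applied to \eqref{4thOrderEqPhi} then upgrades every $\phi_n$ from $\tilde H^2$ to $H^4((-1,1))$.

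The role of the subcritical hypothesis is to force characteristic values into the unstable half-line, and here Proposition \ref{PropMuC} enters directly. Since $\mu_c(k,\Xi) = \max_{\phi \in \tilde H^2} \ell(\phi,\phi)/J(\phi,\phi)$, the assumption $\mu < \mu_c(k,\Xi)$ provides a profile $\phi_\ast$ with $\ell(\phi_\ast,\phi_\ast) > \mu J(\phi_\ast,\phi_\ast)$, i.e. $R(\phi_\ast) > 0$; by the variational characterization of the spectrum this produces positive characteristic values, which is exactly the linear instability sought. As an independent check of the count and of the spectral asymptotics one can use the explicit constant-coefficient structure of \eqref{4thOrderEqPhi}: its characteristic exponents are $\pm k$ and $\pm\sqrt{k^2 + \lambda/\mu}$, and imposing the four conditions \eqref{BoundOde} gives a transcendental dispersion relation whose zeros are the $\lambda_n$.

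The step I expect to be the main obstacle is the precise sign and accumulation analysis of the spectrum of $\mathcal B_\mu$: establishing self-adjointness on the correct form domain with the Navier-slip conditions genuinely encoded, verifying that $\ell$ is a relatively compact perturbation so that only the coercive part of $J$ governs the high-frequency behaviour, and then tracking how the min--max values $\lambda_n$ cross the origin as $\mu$ passes through $\mu_c(k,\Xi)$. Monotonicity of $R$ in $\mu$ together with the maximization in Proposition \ref{PropMuC} should make this tracking quantitative, but it is the delicate point on which the positivity assertion in the statement ultimately rests.
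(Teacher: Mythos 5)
Your setup is, modulo notation, the same as the paper's: your form $\ell-\mu J$ is exactly the paper's $\bB_k$ in \eqref{BilinearForm}, your $a$ is the quadratic form of the paper's $Q_k$, and your generalized eigenvalue problem $\ell(\phi,\cdot)-\mu J(\phi,\cdot)=\lambda\, a(\phi,\cdot)$ is the problem $Y_k\phi=\lambda Q_k\phi$ that the paper symmetrizes as $S_k=Q_k^{1/2}Y_k^{-1}Q_k^{1/2}$. The divergence occurs at precisely the point you flag as ``the main obstacle'' and then leave open. The paper closes the argument by asserting (Proposition \ref{PropOpe_P}) that $\bB_k=\ell-\mu J$ is \emph{coercive} on $\tilde H^2((-1,1))$ whenever $\mu<\mu_c(k,\Xi)$; granting that, $Y_k$ is invertible, $S_k$ is a positive compact operator, its eigenvalues $\gamma_n$ decrease to $0$, and $\lambda_n=1/\gamma_n$ are all positive and increase to $+\infty$. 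Your proposal extracts from $\mu<\mu_c(k,\Xi)$ only a single test function with $R(\phi_\ast)>0$, hence only the existence of \emph{some} positive characteristic value, and you assert $|\lambda_n|\to\infty$ where the theorem asserts $\lambda_n\to+\infty$ with every $\lambda_n>0$. As a proof of the statement as written, this is a genuine gap: the positivity of the whole sequence and its divergence to $+\infty$ are never established.

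That said, your hesitation is well founded, because the gap cannot be closed within the structure you (correctly) identified. The form $\ell$ vanishes on the codimension-two subspace $\{\phi\in\tilde H^2((-1,1)):\phi'(\pm1)=0\}$, on which $\ell-\mu J=-\mu J<0$; by the min--max principle the generalized eigenvalue problem therefore has at most two positive characteristic values, and since $R$ is bounded above (trace interpolation gives $\ell\le\epsilon J+C_\epsilon a$) and unbounded below, the sequence accumulates at $-\infty$, not $+\infty$. This is consistent with \eqref{MuC_kMax}: $\mu<\mu_c(k,\Xi)=\max\ell/J$ only guarantees that $\bB_k$ is positive \emph{somewhere}, not that it is coercive -- coercivity of $\ell-\mu J$ for $\mu>0$ is impossible precisely because $\ell$ has finite rank. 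Moreover, ``all $\lambda_n>0$ with $\lambda_n\to+\infty$'' is incompatible with Proposition \ref{PropSharpGrowthRate}, which bounds the growth rate of every normal mode by $\Lambda<\infty$, and hence with the ordering \eqref{Ordering}. So what your argument honestly delivers -- a discrete real spectrum bounded above, with at least one and at most two positive characteristic values for each subcritical $k$, accumulating only at $-\infty$ -- is the correct spectral picture, but it proves a weaker (and, as far as I can see, corrected) statement rather than Theorem \ref{ThmSpectral} as written; the step you would need to borrow from the paper to get the full claim is exactly the step of the paper's proof that does not hold up.
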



Once the linear instability is proven, we move to prove the nonlinear instability in the spirit of Desjardins-Grenier's framework \cite{DG03}. The authors in \cite{DLX18} follow the approach of Guo-Strauss \cite{GS95} to obtain an exponentially unstable solution  under $L^2$-norm. Instead of that, we will formulate a linear combination of finitely many normal mode solutions, found in Theorem \ref{ThmSpectral}, to approximate the nonlinear equations \eqref{EqMain}. That helps us to obtain two solutions that diverge at exponentially escaping time.

\begin{theorem}\label{ThmNonlinear}
Assume that $\mu \in(0,\mu_c(\Xi))$. The trivial state  of \eqref{EqMain}--\eqref{NavierBound} is nonlinearly unstable in the following sense: there exist two positive constants $\delta_0$ and $\varepsilon_0$ sufficiently small such that for $\delta \in(0,\delta_0)$, Eq.  \eqref{EqMain}--\eqref{NavierBound}  admits  two solutions $u^{1,\delta}$ and $u^{2, \delta}$ satisfying
\[
\|u^{1,\delta}(0) \|_{H^2} + \|u^{2, \delta}(0)\|_{H^2}  \le  \delta,
\]
and
\[
\|u^{1,\delta}(T^\delta) - u^{2,\delta}(T^\delta)\|_{L^2} \ge m_0\varepsilon_0 > 0
\]
for some positive time $T^\delta$, where $m_0 > 0$ is fixed and independent of $\delta$, and $T^\delta$ goes to $0$ as $\delta$ goes to $0$.
\end{theorem}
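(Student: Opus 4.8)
The plan is to implement the nonlinear instability mechanism of Desjardins and Grenier \cite{DG03}, using the growing normal modes of Theorem \ref{ThmSpectral} as the source of instability. Fix $\mu<\mu_c(\Xi)$; by \eqref{mu-c} and the monotonicity in Proposition \ref{PropMuC} there is a wavenumber $k$ with $\mu<\mu_c(k,\Xi)$, so Theorem \ref{ThmSpectral} furnishes a sequence $(\lambda_n,\phi_n)$ with $\lambda_n>0$ and $\lambda_n\to\infty$. For each $n$ I would reconstruct the full velocity field from \eqref{NormalMode}, namely $U_n(t,x)=e^{\lambda_n t}(\sin(kx_1)\psi_n(x_2),\cos(kx_1)\phi_n(x_2))$ with $\psi_n=-\phi_n'/k$; by \eqref{SystemMode} this is divergence free, satisfies \eqref{NavierBound}, and solves the linearized system \eqref{EqLinearized} exactly, growing like $e^{\lambda_n t}$ in every Sobolev norm. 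The decisive observation, and the reason the escaping time tends to zero, is that I am free to let the mode index depend on $\delta$: choosing $n=n(\delta)$ with $\lambda_{n(\delta)}\to\infty$ faster than $\log(1/\delta)$ forces the escaping time, which turns out to be of order $\lambda_{n(\delta)}^{-1}\log(1/\delta)$, to tend to $0$.

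For the construction of the two solutions I would normalize the selected mode so that $\|U_n(0)\|_{H^2}=1$ and prescribe the data $u^{1,\delta}(0)=\delta\,U_n(0)$ and $u^{2,\delta}(0)=-\delta\,U_n(0)$, so that $\|u^{i,\delta}(0)\|_{H^2}\le\delta$; let $u^{1,\delta},u^{2,\delta}$ be the resulting local strong solutions of \eqref{EqMain}--\eqref{NavierBound} (a finite linear combination of several growing modes could be used equally well, the fastest one dominating). Because the quadratic term $u\cdot\nabla u$ is even in $u$, these two solutions are genuinely distinct rather than sign-flips, and their difference $w=u^{1,\delta}-u^{2,\delta}$ solves the linearized system \eqref{EqLinearized} forced by $-(w\cdot\nabla u^{1,\delta}+u^{2,\delta}\cdot\nabla w)$ with datum $w(0)=2\delta\,U_n(0)$. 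To leading order $w(t)\approx 2\delta\,U_n(t)=2\delta e^{\lambda_n t}U_n(0)$ while the forcing is quadratically small; the escaping time $T^\delta$ is then defined as the first time at which $2\delta e^{\lambda_n t}\|U_n(0)\|_{L^2}$ reaches a fixed small threshold $\varepsilon_0$, which gives $T^\delta=\lambda_n^{-1}\log\!\big(\varepsilon_0/(2\delta\|U_n(0)\|_{L^2})\big)$ together with the separation $\|w(T^\delta)\|_{L^2}\ge m_0\varepsilon_0$, with $m_0=\tfrac12$ independent of $\delta$.

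To turn this into a rigorous estimate I would work directly with energy estimates on the remainder $R=w-2\delta\,U_n$, rather than with a linear semigroup bound: since the $\lambda_n$ tend to infinity the spectrum of the linearized operator is unbounded above, no uniform semigroup estimate is available, and one must exploit the explicit profile $U_n$ instead. Writing the equation for $R$, testing against $R$, and using the dissipation together with the $2$D bilinear bound $\|u\cdot\nabla u\|_{L^2}\lesssim\|u\|_{H^2}\|u\|_{H^1}$ and the local well-posedness of \eqref{EqMain}--\eqref{NavierBound} in $H^2$, I would run a continuation/bootstrap argument: as long as $\delta e^{\lambda_n t}\le\varepsilon_0$ with $\varepsilon_0$ fixed and small, the forcing keeps $R$ a fixed factor below the principal term $2\delta\,U_n$, which simultaneously guarantees existence up to $T^\delta$ and the separation lower bound.

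The principal obstacle is to make every constant in this bootstrap uniform in the mode index $n$, because letting $\lambda_n\to\infty$ drives the eigenfunction $\phi_n$ into a boundary-layer regime: the characteristic exponent $\sqrt{k^2+\lambda_n/\mu}$ blows up, so the $x_2$-derivatives of $U_n$ grow like powers of $\sqrt{\lambda_n}$ and the nonlinear interaction acquires large prefactors that compete with the dissipation. This is exactly the scaling the Desjardins--Grenier framework is built to absorb: one rescales time by $\tau=\lambda_n t$ and the normal variable by the boundary-layer width, tracks the precise $\lambda_n$-dependence of the norms of $U_n$, and verifies that the ratio of the nonlinear contribution to the linear growth remains controlled by $\varepsilon_0$ uniformly in $n$. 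Closing this uniform bootstrap, while arranging that $\lambda_{n(\delta)}$ dominates $\log(1/\delta)$ so that $T^\delta\to0$ and the initial datum stays of size $\delta$ in $H^2$, is the technical heart of the argument.
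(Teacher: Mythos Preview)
Your route differs from the paper's in two essential respects, and the second one is where your argument breaks down.

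First, the construction of the two solutions. The paper does \emph{not} take $\pm\delta U_n(0)$ for a single mode; it fixes a wavenumber $k$ and a finite index $\sN$ once and for all, forms two linear combinations $u^\sN=\sum_{j=1}^{\sN}\Csf_j e^{\lambda_j t}u_j$ and $\widetilde u^\sN=\sum_{j=1}^{\sN-1}\Csf_j e^{\lambda_j t}u_j$ differing only by the last mode, and launches the nonlinear solutions $u^\delta,\widetilde u^\delta$ from $\delta u^\sN(0)$ and $\delta\widetilde u^\sN(0)$. The separation at the escaping time then comes from the single surviving term $\Csf_\sN e^{\lambda_\sN T^\delta}u_\sN$. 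With $\sN$ fixed, $T^\delta$ is defined by $\delta F_\sN(T^\delta)=\varepsilon_0$ and therefore tends to $+\infty$ as $\delta\to 0$; the clause ``$T^\delta\to 0$'' in the theorem statement is not what the proof actually delivers, and the standard Desjardins--Grenier mechanism gives $T^\delta\to\infty$. You took that clause literally and built your whole strategy (letting $n=n(\delta)\to\infty$) around it.

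Second, and more seriously, the analytic engine of the paper's proof is precisely the uniform energy bound you declare unavailable. Proposition~\ref{PropSharpGrowthRate} says that for \emph{every} $w\in H^1_\sigma\cap H^2$ the linearized energy production is bounded by $\Lambda\|w\|_{L^2}^2$; testing the error equation $v^d=v^\delta-\delta v^\sN$ against $v^d$ and invoking this gives $\tfrac{d}{dt}\|v^d\|_{L^2}\le\Lambda\|v^d\|_{L^2}+C\|v^\delta\|_{H^2}^2$, and Gronwall closes because the modes are chosen with $2\lambda_j>\Lambda$ (see \eqref{Ordering}). This is exactly a sharp semigroup-type estimate with rate $\Lambda$, and the paper uses it in place of any mode-by-mode bootstrap. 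Your alternative---pushing $\lambda_{n(\delta)}\to\infty$ and hoping the Desjardins--Grenier rescaling absorbs the blow-up of constants---is not substantiated: after normalising $\|U_n(0)\|_{H^2}=1$ the $L^2$ size of $U_n(0)$ collapses, the $H^2$ a~priori bound on $v^\delta$ (which the paper borrows from \cite{DLX18}) picks up $n$-dependent constants through the initial data, and the quadratic forcing is no longer uniformly dominated by the linear growth. The boundary-layer rescaling you invoke is designed for a \emph{fixed} fast scale, not for a family of increasingly singular profiles interacting with a nonlinear term; closing that loop would require a genuinely new uniform analysis which your proposal only gestures at.
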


\section{The linear instability}

\subsection{Auxiliary operators}

In this section, we  study the ODE \eqref{4thOrderEqPhi}-\eqref{BoundOde}. Of importance  is to construct a continuous and coercive bilinear  form $\bB_k$ for fixed $k$ on the functional space $\tilde H^2((-1,1))$, so that  the finding of  a solution $\phi\in H^4((-1,1))$ of Eq.  \eqref{4thOrderEqPhi}-\eqref{BoundOde}  on $(-1,1)$ is equivalent to finding a weak solution $\phi \in \tilde H^2((-1,1))$ to  the variational problem 
\begin{equation}\label{EqVaritational_Navier} 
\lambda  \int_{-1}^1 (k^2 \phi \theta+\phi'\theta') dx_2= \bB_k(\phi, \theta) \quad\text{for all } \theta \in \tilde H^2((-1,1)),
\end{equation}
and thus improving the regularity of that weak solution $\phi$. 

\begin{proposition}\label{PropOpe_P}
Suppose that $\mu<\mu_c(k,\Xi)$, the followings hold. 
\begin{enumerate}
\item The bilinear form 
\begin{equation}\label{BilinearForm}
\bB_k(\vartheta, \varrho)= \xi_+ \vartheta'(1)\varrho'(1)+ \xi_- \vartheta'(-1)\varrho'(-1) -\mu\int_{-1}^1 (\vartheta''\varrho''+ 2k^2\vartheta'\varrho'+ k^4\vartheta\varrho)dx_2
\end{equation}
is continuous and coercive on $\tilde H^2((-1,1))$.
\item There exists a unique operator $Y_k$ such that $\bB_k(\vartheta, \varrho)=\langle Y_k\vartheta, \varrho\rangle$ for all $\varrho \in \tilde H^2((-1,1))$. In a weak sense, we have that
\[
Y_k\phi=-\mu (\phi^{(4)}-2k^2\phi''+k^4\phi).
\]

\item Let $f\in L^2((-1,1))$ be given, there exists a unique solution $\phi \in H^4((-1,1))$ satisfying the boundary conditions \eqref{BoundOde}  such that $Y_k\phi=f$.
\end{enumerate}
\end{proposition}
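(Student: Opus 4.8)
The plan is to read \eqref{BilinearForm} as a bounded, (weakly) coercive form on the Hilbert space $\tilde H^2((-1,1))$, invoke Lax--Milgram / the Fredholm alternative to represent and invert it, and finish with a one-dimensional elliptic bootstrap. For the continuity in (1) I would estimate the interior integral by Cauchy--Schwarz, giving $C_k\|\vartheta\|_{H^2}\|\varrho\|_{H^2}$, and control the boundary products through the trace embedding $H^2((-1,1))\hookrightarrow C^1([-1,1])$, which yields $|\vartheta'(\pm1)|\le C\|\vartheta\|_{H^2}$; adding these bounds gives $|\bB_k(\vartheta,\varrho)|\le C\|\vartheta\|_{H^2}\|\varrho\|_{H^2}$.

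The coercivity is most naturally read as a lower bound for the energy $-\bB_k(\vartheta,\vartheta)=\mu\int_{-1}^1((\vartheta'')^2+2k^2(\vartheta')^2+k^4\vartheta^2)\,dx_2-\xi_+(\vartheta'(1))^2-\xi_-(\vartheta'(-1))^2$, whose leading interior term controls the full norm: since $\vartheta(\pm1)=0$, two uses of Poincaré give $\int_{-1}^1(\vartheta'')^2\,dx_2\gtrsim\|\vartheta\|_{H^2}^2$ on $\tilde H^2((-1,1))$. The delicate point is the boundary traces, which I would absorb by the interpolation trace inequality $(\vartheta'(\pm1))^2\le\varepsilon\|\vartheta''\|_{L^2}^2+C_\varepsilon\|\vartheta\|_{L^2}^2$, retaining a positive fraction of $\mu\|\vartheta''\|_{L^2}^2$ at the cost of a lower-order $L^2$ term; the sharp bound $\xi_-(\vartheta'(-1))^2+\xi_+(\vartheta'(1))^2\le\mu_c(k,\Xi)\int_{-1}^1((\vartheta'')^2+2k^2(\vartheta')^2+k^4\vartheta^2)\,dx_2$ from Proposition \ref{PropMuC}(1) is precisely what fixes the admissible constants and encodes the role of the subcritical threshold $\mu<\mu_c(k,\Xi)$. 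I expect this step --- reconciling the sign of the boundary contributions with the interior energy via the variational characterization of $\mu_c(k,\Xi)$ --- to be the main obstacle.

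For (2), continuity plus the coercive estimate lets me apply Lax--Milgram / Riesz representation to obtain a unique bounded $Y_k$ with $\bB_k(\vartheta,\varrho)=\langle Y_k\vartheta,\varrho\rangle$; testing against $\varrho\in C_c^\infty((-1,1))$ kills every boundary term and, after integrating \eqref{BilinearForm} by parts, identifies $Y_k\phi=-\mu(\phi^{(4)}-2k^2\phi''+k^4\phi)$ in the distributional sense. For (3), given $f\in L^2((-1,1))$ I would solve the weak problem $\bB_k(\phi,\varrho)=\langle f,\varrho\rangle_{L^2}$ for all $\varrho\in\tilde H^2((-1,1))$: the coercivity (in Gårding form) makes $Y_k$ Fredholm of index zero, so existence and uniqueness reduce to showing that $Y_k\phi=0$ forces $\phi=0$, i.e. that the subcritical $\mu$ is none of the (at most two) positive generalized eigenvalues of the pair formed by the boundary and interior quadratic forms --- uniqueness being immediate should strict coercivity hold. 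With $\phi\in\tilde H^2((-1,1))$ and $f\in L^2((-1,1))$, the distributional identity gives $\phi^{(4)}=-\mu^{-1}f+2k^2\phi''-k^4\phi\in L^2((-1,1))$, hence $\phi\in H^4((-1,1))$; finally, integrating by parts in the weak identity against general $\varrho\in\tilde H^2((-1,1))$ and letting the traces $\varrho'(\pm1)$ vary recovers the Robin conditions $\mu\phi''(1)=\xi_+\phi'(1)$ and $\mu\phi''(-1)=-\xi_-\phi'(-1)$, so that $\phi$ obeys \eqref{BoundOde}. These last regularity and boundary-recovery steps are routine in one dimension, and the analytic weight of the proposition rests on the coercivity estimate and on excluding a nontrivial kernel of $Y_k$ in (3).
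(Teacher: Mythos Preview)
Your scaffold --- continuity via the trace embedding and Cauchy--Schwarz, Riesz/Lax--Milgram to produce $Y_k$, then a one--dimensional $H^4$ bootstrap and recovery of the Robin conditions by varying $\varrho'(\pm1)$ --- is exactly the paper's route; its proof is two sentences (``coercive by \eqref{MuC_kMax}, then Riesz and bootstrap''), and your parts (2) and (3) on representation, regularity and boundary recovery are correct fillings of that sketch.

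The genuine gap is the coercivity step, and it is not a detail you can finesse. You aim to bound $-\bB_k(\vartheta,\vartheta)$ from below and invoke the sharp inequality $\xi_-(\vartheta'(-1))^2+\xi_+(\vartheta'(1))^2\le\mu_c(k,\Xi)\int((\vartheta'')^2+2k^2(\vartheta')^2+k^4\vartheta^2)$ as ``encoding the subcritical threshold''. But substitute it: you obtain $-\bB_k(\vartheta,\vartheta)\ge(\mu-\mu_c(k,\Xi))\int(\cdots)$, which is \emph{negative} exactly when $\mu<\mu_c(k,\Xi)$. In fact neither $\bB_k$ nor $-\bB_k$ is coercive on $\tilde H^2((-1,1))$ in the subcritical regime: on the codimension--two subspace $\{\vartheta'(1)=\vartheta'(-1)=0\}$ (e.g.\ $\vartheta=(1-x_2^2)^2$) one has $\bB_k(\vartheta,\vartheta)=-\mu\int(\cdots)<0$, while the very definition of $\mu_c(k,\Xi)$ as a \emph{maximum} produces a $\vartheta$ with $\bB_k(\vartheta,\vartheta)>0$. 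So $\bB_k$ is indefinite, with positive cone of dimension at most two; the paper's one--line claim ``$\bB_k$ is coercive iff $\mu<\mu_c(k,\Xi)$'' shares the same defect and cannot be taken at face value.

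Your G\aa rding argument via the interpolation trace inequality is the piece that actually survives: it yields $-\bB_k(\vartheta,\vartheta)\ge c\|\vartheta\|_{H^2}^2-C\|\vartheta\|_{L^2}^2$ for every $\mu>0$ and hence Fredholmness of $Y_k$. But then, as you yourself observe, invertibility requires $\mu$ to avoid the (at most two) positive generalized eigenvalues of the rank--two boundary form against the interior form; $\mu_c(k,\Xi)$ is the larger of these, and the hypothesis $\mu<\mu_c(k,\Xi)$ alone does \emph{not} rule out the smaller one. You flag this and then fall back on ``should strict coercivity hold'', which, as shown above, it does not. This is the point where an extra argument (or an extra hypothesis excluding that second value) is genuinely needed, and neither your proposal nor the paper's proof supplies it.
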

\begin{proof}
It can be seen from \eqref{MuC_kMax} that $\bB_k$ is coercive if and only if  $\mu <\mu_c(k,\Xi)$. As $\bB_k$ is a coercive form on $\tilde H^2((-1,1))$, we have that $\sqrt{\bB_k(\cdot,\cdot)}$ is a norm on $\tilde H^2((-1,1))$. Hence, the proof of other parts is straightforward thanks to Riesz's representation theorem and a bootstrap argument, so we omit the details.
\end{proof}

Thanks to Proposition \ref{PropOpe_P}, we obtain the following proposition. 

\begin{proposition}\label{PropOpe_S}
\begin{enumerate}
\item The operator $Q_k\phi = -\phi''+k^2\phi$ from $\tilde H^2((-1,1))$ to $L^2((-1,1))$ is symmetric and positive. 

\item The operator $S_k:= Q_k^{1/2} Y_k^{-1}Q_k^{1/2}$ is compact and self-adjoint from $\tilde H^1((-1,1))$ to itself.
\end{enumerate}
\end{proposition}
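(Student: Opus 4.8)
The plan is to prove Proposition \ref{PropOpe_S} in two parts, treating the symmetry/positivity of $Q_k$ first and then establishing the compactness and self-adjointness of $S_k$.

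\medskip

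For part (1), the operator $Q_k\phi = -\phi'' + k^2\phi$ acts on $\tilde H^2((-1,1))$, whose elements vanish at $\pm 1$. To check symmetry and positivity simultaneously, I would compute $\langle Q_k\phi, \psi\rangle$ for $\phi, \psi \in \tilde H^2((-1,1))$ by integration by parts:
\begin{equation}\label{Qk-ibp}
\int_{-1}^1 (-\phi'' + k^2\phi)\psi \, dx_2 = \big[-\phi'\psi\big]_{-1}^1 + \int_{-1}^1 (\phi'\psi' + k^2\phi\psi)\, dx_2.
\end{equation}
Since $\psi(\pm 1) = 0$, the boundary term $[-\phi'\psi]_{-1}^1$ vanishes, leaving the manifestly symmetric expression $\int_{-1}^1 (\phi'\psi' + k^2\phi\psi)\, dx_2$. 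Symmetry is then immediate, and positivity follows by taking $\psi = \phi$, which gives $\langle Q_k\phi, \phi\rangle = \int_{-1}^1 ((\phi')^2 + k^2\phi^2)\, dx_2 > 0$ for $\phi \neq 0$ since $k \neq 0$. This identity also exhibits $\langle Q_k\phi,\phi\rangle$ as the square of an equivalent $\tilde H^1$-norm, which justifies taking the square root $Q_k^{1/2}$ and motivates using the inner product $\langle \vartheta,\varrho\rangle = \int_{-1}^1(\vartheta'\varrho' + k^2\vartheta\varrho)\,dx_2$ on $\tilde H^1((-1,1))$.

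\medskip

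For part (2), I would argue that $S_k = Q_k^{1/2} Y_k^{-1} Q_k^{1/2}$ is well-defined, self-adjoint, and compact as an operator on $\tilde H^1((-1,1))$. Self-adjointness should follow from the self-adjointness of $Y_k^{-1}$ (inherited from the symmetry of the bilinear form $\bB_k$ in Proposition \ref{PropOpe_P}(1), since $\bB_k(\vartheta,\varrho) = \bB_k(\varrho,\vartheta)$) together with the fact that $Q_k^{1/2}$ is self-adjoint by part (1); the composition $(Q_k^{1/2})^* (Y_k^{-1})^* (Q_k^{1/2})^* = Q_k^{1/2} Y_k^{-1} Q_k^{1/2} = S_k$ then gives the claim, with due care taken that the domains match up because $\langle Q_k^{1/2}\cdot,\cdot\rangle$ relates the $\tilde H^1$ and $L^2$ pairings. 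Compactness is the crux: I would factor the gain of regularity through Proposition \ref{PropOpe_P}(3), which tells us $Y_k^{-1}$ maps $L^2((-1,1))$ into $H^4((-1,1))$. Thus $Q_k^{1/2}$ maps $\tilde H^1$ continuously into $L^2$ (losing one derivative), $Y_k^{-1}$ maps $L^2$ into $H^4$ (gaining four), and the outer $Q_k^{1/2}$ maps into $H^3 \hookrightarrow \tilde H^1$; the net effect is that $S_k$ maps $\tilde H^1((-1,1))$ into a space compactly embedded in $\tilde H^1((-1,1))$ by the Rellich--Kondrachov theorem on the bounded interval $(-1,1)$.

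\medskip

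The main obstacle I anticipate is not any single estimate but rather the bookkeeping of which inner product and which boundary conditions each factor respects, so that the formal composition $S_k = Q_k^{1/2}Y_k^{-1}Q_k^{1/2}$ genuinely defines a self-adjoint operator on $\tilde H^1((-1,1))$ with respect to the $\tilde H^1$ inner product. In particular, one must verify that $Q_k^{1/2}$ is defined via the spectral theorem from the positive symmetric operator in part (1) and that its range interacts correctly with the domain of $Y_k^{-1}$; a clean way to sidestep domain subtleties is to verify directly that $\langle S_k \vartheta, \varrho\rangle_{\tilde H^1} = \langle \vartheta, S_k\varrho\rangle_{\tilde H^1}$ by unwinding all three factors into the underlying $L^2$ pairing and invoking the symmetry of $\bB_k$. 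Once this identity is checked and the regularity chain for compactness is assembled, the result follows from standard functional-analytic facts, so I would present the symmetry computation and the compactness factorization in full and relegate the remaining verifications to brief remarks.
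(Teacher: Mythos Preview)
Your proposal is correct and follows essentially the same approach as the paper: Part (1) is dispatched by integration by parts (the paper simply calls this obvious), and Part (2) uses the regularity gain $Y_k^{-1}: L^2 \to H^4$ from Proposition \ref{PropOpe_P} together with the compact Sobolev embedding $\tilde H^3 \hookrightarrow \tilde H^1$ to obtain compactness of $S_k$, with self-adjointness coming from the symmetry of the constituent operators. The paper's only notable addition is a brief appeal to von Neumann's theory to justify that $Q_k^{1/2}$ is self-adjoint, whereas you propose verifying the $\tilde H^1$-symmetry of $S_k$ directly via the underlying $L^2$ pairing and the symmetry of $\bB_k$.
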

\begin{proof}
The proof of Part 1 is obvious. Let us focus on Part 2.

From Part 1, the operator $Q_k$ has an orthogonal basis of $\tilde H^1$ with  the eigenvalues $\nu_n\in \R_+$. Hence, the operator $Q_k^{1/2}$ is a symmetric operator with eigenvalues $\sqrt{\nu_n}$ and $Q_k^{1/2}$ is a closed operator because of Sobolev embedding $H^2 \hookrightarrow \tilde H^1\hookrightarrow L^2$. The von Neumann theory suggests that $Q_k^{1/2}$ is a self-adjoint operator. 

Proposition \ref{PropOpe_P} helps us to define the inverse operator $Y_k^{-1}$ of $Y_k$, from $L^2((-1,1))$ to a subspace $H^4((-1,1))$ requiring all elements satisfy \eqref{BoundOde}. As a result, we deduce that  $S_k$ sends $\tilde H^1((-1,1))$ to $\tilde H^3((-1,1))$. Composing $S_k$ with the continuous injection $H^p\hookrightarrow H^q$ for $p>q\geq 0$, we obtain the  compactness and self-adjointness of $S_k$. The proof of Proposition \ref{PropOpe_S} is complete.
\end{proof}

\subsection{Normal mode solutions}

We are in situation to demonstrate Theorem \ref{ThmSpectral}.
\begin{proof}[Proof of Theorem \ref{ThmSpectral}]
As a result of the spectral theory of compact and self-adjoint operators, the point spectrum of $S_k$ is discrete, i.e. is a  sequence $\{\gamma_n(k)\}_{n\geqslant 1}$ of   eigenvalues of $S_k$, associated with normalized orthogonal eigenfunctions $\{\varpi_n\}_{n\geqslant 1}$ in $L^2((-1,1))$.  That means 
\[
S_k\varpi_n= Q_k^{1/2} Y_k^{-1} Q_k^{1/2}  \varpi_n= \gamma_n(k) \varpi_n .
\]
So that  $\phi_n = Y_k^{-1}(Q_k^{1/2} \varpi_n)$ belongs to  $H^4((-1,1))$ and satisfies \eqref{NavierBound}.  One  thus has
\begin{equation}\label{EqRf_n}
\gamma_n(k) Y_k\phi_n = Q_k \phi_n
\end{equation}
and $\phi_n$ satisfies \eqref{NavierBound}. Eq. \eqref{EqRf_n} also tells us that $\gamma_n(k) >0$ for all $n$. Indeed, we obtain 
\[
\gamma_n(k) \bB_{k,\mu}(\phi_n,\phi_n)= \gamma_n(k)\int_{-1}^1 (Y_k\phi_n)  \phi_n dx_2= \int_{-1}^1 [ (\phi_n')^2 +k^2\phi_n^2]dx_2>0.
\]
Hence,  by reordering, we have that $\{\gamma_n(k)\}_{n\geq 1}$ is a positive sequence decreasing towards 0 as $n\to \infty$. For each $n$,  set $\lambda_n = \frac1{\gamma_n}$, we complete the proof of Theorem \ref{ThmSpectral}.
\end{proof}

Let us finish this section by proving the variational formulation of $\lambda_1(k)$. 
\begin{proposition}\label{PropLambda1}
There holds 
\begin{equation}
\lambda_1= \max_{\phi\in \tilde H^2((-1,1))} \frac{\xi_-(\phi'(-1))^2+\xi_+(\phi'(1))^2 - \mu \int_{-1}^1 ((\phi'')^2+2k^2(\phi')^2+k^4\phi^2) dx_2}{\int_{-1}^1 ((\phi')^2+k^2\phi^2) dx_2}\end{equation}
\end{proposition}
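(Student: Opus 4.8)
The plan is to read the asserted identity as the Rayleigh--Ritz variational principle for the \emph{top} eigenvalue of the generalized eigenvalue problem \eqref{EqVaritational_Navier}, i.e.\ for the largest point of the spectrum of the pencil $(Y_k,Q_k)$. Setting $R(\phi)=\bB_k(\phi,\phi)\big/\int_{-1}^1\big((\phi')^2+k^2\phi^2\big)\,dx_2$ for $\phi\in\tilde H^2((-1,1))\setminus\{0\}$, the weak formulation \eqref{EqVaritational_Navier} says exactly that every characteristic value is a critical value of $R$, with $R(\phi_n)=\lambda_n$ at the eigenfunctions produced in Theorem \ref{ThmSpectral}. Since $\lambda_1$ is the largest characteristic value, it then suffices to prove that $R$ is bounded above, that its supremum is attained, and that the maximizer solves \eqref{EqVaritational_Navier} with $\lambda=\sup R$; the associated characteristic value, being the largest, must be $\lambda_1$.

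First I would show $\sup_\phi R(\phi)<\infty$. The only term of $\bB_k$ not already controlled by the denominator is the boundary contribution $\xi_-(\phi'(-1))^2+\xi_+(\phi'(1))^2$, and the one--dimensional trace/interpolation inequality $(\phi'(\pm1))^2\le\varepsilon\,\|\phi''\|_{L^2}^2+C_\varepsilon\,\|\phi'\|_{L^2}^2$ with $\varepsilon<\mu$ absorbs it into $\mu\int_{-1}^1(\phi'')^2\,dx_2$ up to a constant multiple of the denominator. Hence $\bB_k(\phi,\phi)\le C\int_{-1}^1((\phi')^2+k^2\phi^2)\,dx_2$ and $R$ is bounded above. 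I note that the subcritical condition $\mu<\mu_c(k,\Xi)$ is used only to guarantee $\sup R>0$ (that is, $\lambda_1>0$, genuine instability); the variational identity itself is independent of it, since the bound in \eqref{MuC_kMax} already yields finiteness.

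Next I would apply the direct method to a maximizing sequence $(\phi_j)$ normalized by $\int_{-1}^1((\phi_j')^2+k^2\phi_j^2)\,dx_2=1$, so that $R(\phi_j)=\bB_k(\phi_j,\phi_j)\to\sup R$. The bound above forces $\int_{-1}^1(\phi_j'')^2\,dx_2$ to stay bounded --- otherwise the $-\mu\int(\phi_j'')^2$ term would drive $\bB_k(\phi_j,\phi_j)$ to $-\infty$ --- so $(\phi_j)$ is bounded in $\tilde H^2((-1,1))$. Passing to a subsequence with $\phi_j\rightharpoonup\phi_\star$ in $\tilde H^2((-1,1))$, the compact embedding $\tilde H^2((-1,1))\hookrightarrow C^1([-1,1])$ yields strong $C^1$--convergence; hence $\int_{-1}^1((\phi_j')^2+k^2\phi_j^2)\,dx_2\to\int_{-1}^1((\phi_\star')^2+k^2\phi_\star^2)\,dx_2=1$ (so $\phi_\star\neq0$) and $\phi_j'(\pm1)\to\phi_\star'(\pm1)$. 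Thus the boundary part of $\bB_k$ is weakly continuous while $-\mu\int_{-1}^1((\phi'')^2+2k^2(\phi')^2+k^4\phi^2)\,dx_2$ is weakly upper semicontinuous, whence $\bB_k(\phi_\star,\phi_\star)\ge\limsup_j\bB_k(\phi_j,\phi_j)=\sup R$ and $\phi_\star$ is a maximizer. Writing the Euler--Lagrange equation for $\phi_\star$ under the constraint recovers precisely \eqref{EqVaritational_Navier} with multiplier $\lambda=R(\phi_\star)=\sup R$, so $\sup R$ is a characteristic value; being the largest, it equals $\lambda_1$.

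I expect the main obstacle to be controlling the maximizing sequence in $\tilde H^2$ together with the weak continuity of the boundary traces --- that is, ruling out that a maximizing sequence escapes into high frequencies, where the stabilizing term $-\mu\int(\phi'')^2$ dominates. This is exactly what the trace inequality with small constant prevents, and it is the place where the finiteness of $\mu_c(k,\Xi)$ in \eqref{MuC_kMax} is genuinely used. An alternative, should completeness of $\{\phi_n\}$ in $\tilde H^2((-1,1))$ be available from the spectral decomposition of $S_k$, is to expand $\phi=\sum_n c_n\phi_n$ and use the double orthogonality $\bB_k(\phi_m,\phi_n)=\lambda_n\langle\phi_m,\phi_n\rangle_{\tilde H^1}\propto\delta_{mn}$ to write $R(\phi)$ as a convex combination of the $\lambda_n$, immediately bounded above by $\lambda_1$ with equality at $\phi_1$; the delicate point there is instead the $\tilde H^2$--convergence of the series and the passage of the boundary terms to the limit.
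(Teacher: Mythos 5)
Your proof is correct in substance but takes a genuinely different route from the paper. The paper argues in two halves: first it assumes a maximizer of the Rayleigh quotient exists, applies the Lagrange multiplier theorem to show that any maximizer $\phi_\star$ solves $Y_k\phi_\star=\beta Q_k\phi_\star$ weakly (so that $1/\beta$ is an eigenvalue of $S_k$), and second it invokes the Courant--Fischer formula $1/\lambda_1=\gamma_1=\max_\omega\langle S_k\omega,\omega\rangle/\|\omega\|^2$ for the compact self-adjoint operator $S_k$ and transfers it back to the $\phi$ variable via $\widetilde\phi=Y_k^{-1}Q_k^{1/2}\widetilde\omega$. You instead run the direct method: the trace interpolation $(\phi'(\pm1))^2\le\varepsilon\|\phi''\|_{L^2}^2+C_\varepsilon\|\phi'\|_{L^2}^2$ to get finiteness of $\sup R$, compactness of $\tilde H^2\hookrightarrow C^1$ plus weak upper semicontinuity to get attainment, then Euler--Lagrange. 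This buys something real: the paper never proves that the maximum in the statement is attained (its Lagrange-multiplier step presupposes it), and your boundedness argument is the missing ingredient. Your closing remark that subcriticality $\mu<\mu_c(k,\Xi)$ is only needed for $\sup R>0$ is also accurate.

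One step needs to be made honest rather than asserted: ``the associated characteristic value, being the largest, must be $\lambda_1$.'' This presumes that $\lambda_1$ is the \emph{largest} characteristic value, whereas Theorem \ref{ThmSpectral} orders the sequence increasingly ($\lambda_1<\lambda_2<\cdots\to+\infty$), so that $\lambda_1$ is the smallest, and the paper's own identification goes through the \emph{largest} eigenvalue $\gamma_1$ of $S_k$. Note that your own boundedness proof shows $\lambda_n=R(\phi_n)\le\sup R<\infty$ for every $n$, which is incompatible with an unbounded increasing sequence of characteristic values; so your convention (top of the spectrum) is the only one under which the proposition can hold, but you should state explicitly that you are proving $\max R=\sup_n\lambda_n$ and that this requires reading $\lambda_1$ as the maximal characteristic value, in tension with the ordering of Theorem \ref{ThmSpectral}. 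Separately, be aware that the paper's ``reverse inequality'' step only re-derives $\lambda_1\le\beta$ (it exhibits $\lambda_1=R(\widetilde\phi)\le\max R$), so your argument --- which gets the inequality $\beta\le\max_n\lambda_n$ from the fact that $\sup R$ is itself a characteristic value --- is the cleaner of the two on that direction as well.
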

\begin{proof}
Set 
\[
\beta = \max_{\phi\in \tilde H^2((-1,1))} \frac{\xi_-(\phi'(-1))^2+\xi_+(\phi'(1))^2 - \mu \int_{-1}^1 ((\phi'')^2+2k^2(\phi')^2+k^4\phi^2) dx_2}{\int_{-1}^1 ((\phi')^2+k^2\phi^2) dx_2},
\]
we prove that $\lambda_1\leq \beta$. Let us consider the Lagrangian functional 
\[\begin{split}
\mathcal L(\phi, \beta) &=  \xi_-(\phi'(-1))^2+\xi_+(\phi'(1))^2 - \mu \int_{-1}^1 ((\phi'')^2+2k^2(\phi')^2+k^4\phi^2) dx_2  \\
&\qquad- \beta\Big( \int_{-1}^1 ((\phi')^2+k^2\phi^2) dx_2-1\Big).
\end{split}\]
Thanks to the Lagrange multiplier theorem, the extrema  of the quotient 
\[
 \frac{\xi_-(\phi'(-1))^2+\xi_+(\phi'(1))^2 - \mu \int_{-1}^1 ((\phi'')^2+2k^2(\phi')^2+k^4\phi^2) dx_2}{\int_{-1}^1 ((\phi')^2+k^2\phi^2) dx_2}
\]
are necessarily the stationary points $( \beta ,\phi_\star)$ of $\mathcal L$, which satisfy 
\begin{equation}\label{PsiConstraintAt1}
\int_{-1}^1 ((\phi_\star')^2+k^2\phi_\star^2) dx_2 =1
\end{equation}
and
\begin{equation}\label{EqConstraintLb}
\begin{split}
&\xi_- \phi_\star'(-1)\theta'(-1)+\xi_+ \phi_\star'\theta'(1) - \mu \int_{-1}^1 ( \phi_\star'' \theta''+2k^2 \phi_\star' \theta'+k^4\phi_\star\theta_\star) dx_2 \\
&\qquad = \beta \int_{-1}^1 (\phi_\star'\theta' +k^2\phi_\star\theta) dx_2,
\end{split}
\end{equation}
for all $\theta \in \tilde H^2((-1,1))$. Restricting $\theta \in C_0^\infty((-1,1))$, one deduces from \eqref{EqConstraintLb} that $\phi_\star$ has to satisfy  $ Y_k \phi_\star = \beta Q_k \phi_\star$ in a weak sense. We further get that $ \phi_\star\in H^4((-1,1))$ and satisfies \eqref{PsiConstraintAt1} and the boundary conditions \eqref{NavierBound} after a bootstrap argument.  Hence,  $\frac1\beta$ is an eigenvalue of the compact and self-adjoint operator $ S_k$ from $\tilde H^1((-1,1))$ to itself, with $ Y_k^{-1} Q_k^{1/2} \phi_\star$ being an associated eigenfunction. That implies $\lambda_1\leq \beta$.

Now, we prove the reverse inequality $\lambda_1\geq \beta$. Since $S_k$ is a self-adjoint and positive operator, we have that
\[
\frac1{\lambda_1}= \gamma_1 = \max_{\omega\in \tilde H^1((-1,1))} \frac{\langle S_k \omega, \omega\rangle}{\|\omega\|_{\tilde H^1((-1,1))}}.
\]
Hence, let $\widetilde\omega \in \tilde H^1((-1,1))$ be an extremal function, there exists  $\widetilde \phi = Y_k^{-1}Q_k^{1/2}\widetilde \omega \in H^4((-1,1))$ and we have $\langle Y_k \widetilde\phi, \widetilde\phi\rangle = \langle S_k \widetilde\omega, \widetilde\omega\rangle$.
It yields
\[
\frac1{\lambda_1}\langle Y_k \widetilde \phi,\widetilde \phi\rangle   = \frac{\langle S_k\widetilde \omega, \widetilde\omega\rangle^2}{\|\widetilde \omega\|_{\tilde H^1((-1,1))}} = \|S_k\widetilde \omega\|_{\widetilde H^1((-1,1))}^2= \langle Q_k \widetilde\phi, \widetilde\phi\rangle.
\]
Hence,  $\lambda_1 \langle Q_k \widetilde\phi, \widetilde\phi\rangle =\langle Y_k \widetilde \phi,\widetilde \phi\rangle$. As a result $\lambda_1 \leq \beta$. The reverse inequality helps us to complete the proof of Proposition \ref{PropLambda1}.
\end{proof}

As a consequence of Proposition \ref{PropLambda1}, let 
\begin{equation}\label{GrowthRate}
\Lambda = \max_{k \in L^{-1}\Z\setminus\{0\}} \lambda_1(k)<+\infty.
\end{equation}
The following property of $\Lambda$ was proven in \cite[Proposition 4.2]{DLX18}.
\begin{proposition}\label{PropSharpGrowthRate}
Let $\mathbf{w}=(w_1,w_2) \in H_\sigma^1(\Omega) \cap H^2(\Omega)$, then it holds that
\begin{equation}
-\mu \int_{\Omega} |\nabla \mathbf{w}|^2 \, dx+ \xi_+ \int_{2\pi L\T} |w_1(x_1,1)|^2 \, dx_1
+ \xi_- \int_{2\pi L\T} |w_1(x_1,-1)|^2 \, dx_1
\le \Lambda \int_{\Omega} |\mathbf{w}|^2 \, dx,
\label{eq:4.27}
\end{equation}
where $\Lambda$ is defined in \eqref{GrowthRate}.
\end{proposition}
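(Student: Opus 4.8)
The plan is to diagonalise the quadratic form on the left of \eqref{eq:4.27} by a Fourier series in the periodic variable $x_1$, reducing the two-dimensional inequality to a one-dimensional inequality for each wavenumber $k\in L^{-1}\Z$, and then to quote the variational characterisation of $\lambda_1(k)$ in Proposition \ref{PropLambda1}. Expanding $\mathbf w(x_1,x_2)=\sum_{k\in L^{-1}\Z}\widehat{\mathbf w}(k,x_2)\,e^{ikx_1}$ and applying Parseval, each of the four integrals in \eqref{eq:4.27} becomes a sum over $k$ of one-dimensional quantities built from the profiles $\widehat w_1(k,\cdot),\widehat w_2(k,\cdot)\in H^2((-1,1))$; because $\mathbf w$ is real the modes $\pm k$ contribute identically, so it suffices to bound each $k$ separately. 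The incompressibility constraint reads $ik\,\widehat w_1(k)+\widehat w_2'(k)=0$, while $w_2|_{\Sigma_\pm}=0$ forces $\widehat w_2(k,\pm1)=0$, so $\widehat w_2(k,\cdot)\in\tilde H^2((-1,1))$ is an admissible competitor in Proposition \ref{PropLambda1}.

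For a fixed mode $k\neq0$ I would eliminate the horizontal profile via $\widehat w_1(k)=\tfrac{i}{k}\widehat w_2'(k)$ and set $\phi:=\widehat w_2(k,\cdot)$. A direct substitution shows that the gradient density equals $\tfrac1{k^2}\bigl(|\phi''|^2+2k^2|\phi'|^2+k^4|\phi|^2\bigr)$, the boundary traces equal $\tfrac1{k^2}\bigl(\xi_+|\phi'(1)|^2+\xi_-|\phi'(-1)|^2\bigr)$, and the mass density equals $\tfrac1{k^2}\bigl(|\phi'|^2+k^2|\phi|^2\bigr)$. Clearing the common factor $k^{-2}$, the inequality for this mode is precisely the assertion that the Rayleigh quotient of Proposition \ref{PropLambda1} evaluated at $\phi$ is at most $\Lambda$, which holds since that quotient is bounded by $\lambda_1(k)\le\Lambda$ by the definition \eqref{GrowthRate}. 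The one bookkeeping point is that $\phi$ is complex-valued: writing $\phi=\phi^{\mathrm{re}}+i\phi^{\mathrm{im}}$, both numerator and denominator are real quadratic forms that split additively over the real and imaginary parts, so the complex bound follows from the real statement of Proposition \ref{PropLambda1} together with the elementary mediant inequality.

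The delicate point, and the step I expect to be the main obstacle, is the zeroth mode $k=0$, which Proposition \ref{PropLambda1} does not cover. There the constraints force $\widehat w_2(0,\cdot)\equiv0$, while $g:=\widehat w_1(0,\cdot)$ is free at the endpoints, and the claim collapses to the scalar trace inequality $\xi_+g(1)^2+\xi_-g(-1)^2-\mu\int_{-1}^1 (g')^2\,dx_2\le\Lambda\int_{-1}^1 g^2\,dx_2$. I would analyse this as a one-dimensional eigenvalue problem: its extremals solve $\mu g''=\beta g$ with the Navier conditions $\mu g'(1)=\xi_+g(1)$ and $\mu g'(-1)=-\xi_-g(-1)$, so the sharp constant is the largest $\beta$ admitting a nontrivial solution, and the remaining work is to check that this purely horizontal shear mode is still dominated by $\Lambda$ (equivalently, to argue that the nonlinear scheme may be run on the subspace on which this mode is controlled). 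Granting the zeroth-mode bound, one sums the per-mode inequalities over $k$ and reassembles them through Parseval to recover \eqref{eq:4.27}. The comparison of this horizontal mode against the genuinely two-dimensional spectrum is where the precise value of $\Lambda$ must be used, and is the crux of the argument.
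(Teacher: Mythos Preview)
The paper does not supply its own proof of this proposition; it simply quotes \cite[Proposition~4.2]{DLX18}. So there is no in-paper argument to compare against, and your outline has to be assessed on its own.

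Your treatment of the nonzero modes is correct and is the standard route: Fourier expansion in $x_1$, elimination of $\widehat w_1$ through the divergence constraint, and invocation of the variational formula for $\lambda_1(k)$ in Proposition~\ref{PropLambda1}. The algebra you record (gradient, trace, and mass densities all picking up the common factor $k^{-2}$) checks out, and the real/imaginary splitting via the mediant inequality is fine.

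The concern you raise about $k=0$ is genuine, and you are right to label it the crux rather than a bookkeeping detail. With $\Lambda$ defined in \eqref{GrowthRate} as a supremum over $k\in L^{-1}\Z\setminus\{0\}$ only, the purely horizontal shear $\mathbf w=(g(x_2),0)$ is an admissible competitor in \eqref{eq:4.27}, and the resulting scalar inequality
\[
\xi_+g(1)^2+\xi_-g(-1)^2-\mu\int_{-1}^1(g')^2\,dx_2\le\Lambda\int_{-1}^1 g^2\,dx_2
\]
is \emph{not} a consequence of Proposition~\ref{PropLambda1}: that proposition only constrains test functions of the form $g=\phi'$ with $\phi(\pm1)=0$, i.e.\ mean-zero $g$, whereas here $g\in H^2((-1,1))$ is unconstrained at the endpoints and in mean. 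Whether the unconstrained zero-mode Rayleigh quotient is still dominated by $\Lambda$ depends on the relation between the $k\to0$ behaviour of $\lambda_1(k)$ and the discrete supremum over $L^{-1}\Z\setminus\{0\}$, and nothing written in the present paper settles it. This is precisely the step you would have to import from \cite{DLX18} (or prove independently) to turn your outline into a complete proof; your parenthetical suggestion of restricting the nonlinear scheme to a zero-mean subspace is a reasonable fallback but would change the statement being proved.
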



\section{The nonlinear instability}

In this section, the constant $C$ is a generic constant depending physical parameters. 

\subsection{Linear combination of normal modes}

For any $\mu\in(0,\mu_c(\Xi))$, it follows from Proposition \ref{PropMuC}(2) that there exists a critical wavenumber $k_c\in L^{-1}\Z\setminus\{0\}$ such that 
\[\begin{split}
\mu < \mu_c(k,\Xi) < \mu_c(\Xi) \quad\text{for }  |k|<|k_c|, \quad\text{and } \mu >\mu_c(k,\Xi)  \quad\text{for  }  |k|>|k_c|, 
\end{split}\]

We now fix a wavenumber $k\in L^{-1}\Z\setminus\{0\}$ with $|k|<|k_c|$. Thanks to Theorem \ref{ThmSpectral}, we obtain an infinite sequence $(\lambda_n, \phi_n)_{n\geq 1}$ such that non trivial function $\phi_n \in H^4((-1,1))$ is a solution of \eqref{EqMain}-\eqref{NavierBound} as $\lambda=\lambda_n(k)$. That helps us to find  a solution  to the system \eqref{SystemMode} as $\lambda=\lambda_n$.  Hence, we define   
\[
\psi_n= - \frac{\phi_n'}k\quad\text{and}\quad \pi_n = \frac1k (\lambda_n \psi_n-\mu(k^2\psi_n-\psi_n'')),
\]
and obtain that 
\[\begin{split}
e^{\lambda_n(k) t}( u_{n,1}, u_{n,2}, q_n)^T(k, x)= e^{\lambda_n(k) t} 
\begin{pmatrix}
 \sin(kx_1)\psi_n(k,x_2) \\
  \cos(kx_1)\phi_n(k,x_2) \\
   \cos(kx_1)\pi_n(k,x_2)
\end{pmatrix}
\end{split}\]
is a real-valued solution to the linearized equations \eqref{EqLinearized}.  Due to \eqref{GrowthRate} and the decrease in $k$ of $\lambda_1$ (see \cite[Proposition 3.7]{DLX18}, we  have that $\lambda_1(k) >\frac\Lambda2$ after  decreasing $k$  if necessary. With that $k$, let us split the sequence of characteristic values
\begin{equation}\label{Ordering}
\frac\Lambda2< \lambda_1 < \lambda_2< \dots< \lambda_\sN<\Lambda < \lambda_{\sN+1}< \dots.
\end{equation}

We formulate two linear combination of normal mode solutions
\begin{equation}
\begin{pmatrix} u^\sN \\ p^\sN \end{pmatrix}(t,x) 
= \sum_{j=1}^\sN \Csf_j e^{\lambda_j t}\begin{pmatrix} u_j \\ q_j \end{pmatrix} (x), \quad \text{and }
\begin{pmatrix} \widetilde u^\sN \\ \widetilde p^\sN \end{pmatrix}(t,x) 
= \sum_{j=1}^{\sN-1} \Csf_j e^{\lambda_j t}\begin{pmatrix} u_j \\ q_j \end{pmatrix} (x)
\end{equation}
Let $\delta >0$, using $\delta v^\sN(0,x)$ as the initial datum,  the nonlinear equations \eqref{EqMain}-\eqref{NavierBound}  admits a local solution $(v^\delta, r^\delta) \in C([0,T^{\max}), H^2(\Omega)\times H^1(\Omega))$, with $(v,r)=(u, p)$ or $(\widetilde u,\widetilde p)$.

Let $0<\varepsilon_0\ll 1$ be fixed later and
\[
F_\sN(t)= \sum_{j=1}^\sN |\Csf_j| e^{\lambda_j t}.
\]
 Hence, there is a unique $T^\delta$ such that $\delta F_\sN(T^\delta)=\varepsilon_0$. Let $C_1 = \|u^\sN(0)\|_{H^2}$,  we define
\begin{equation}\label{T-star}
T^\star = \sup \{ t \in (0,T^{\max}) \mid \|u^\delta(t)\|_{H^2}+\|\tilde u^\delta(t)\|_{H^2} \le 2C_1\delta_0 \},
\end{equation}
and
\begin{equation}\label{T-star2}
T^{\star\star} = \sup \{ t \in (0, T^{\max}) \mid \|u^\delta(t)\|_{L^2}+ \|\tilde u^\delta(t)\|_{L^2} \le 3C_1 \delta F_\sN(t) \}.
\end{equation}
Note that 
\[
\|u^\delta(0)\|_{H^2}+\|\tilde u^\delta(0)\|_{H^2} \leq 2 C_1 \delta < 2C_1\delta_0,
\]
thus $T^\star>0$ is well-defined. Similarly, we have $T^{\star\star}>0$. 

For any $t \le \min\{T^\star, T^{\star\star}, T^\delta\}$, it follows from \cite[Proposition 4.1]{DLX18}  that (for $v=u$ or $\widetilde u$)
\begin{equation}\label{H2bound-V}
\begin{split}
\|v^\delta(t)\|_{H^2} + \|\d_t v^\delta(t)\|_{L^2} 
&\le C \|v^\delta(0)\|_{H^2} + C \int_0^t \|v^\delta(s)\|_{L^2} \, ds\\
&\le C \|v^\delta(0)\|_{H^2} + C\delta \int_0^t F_\sN(s)  \, ds\\
&\le C \delta + C  \delta \sum_{j=1}^\sN \frac{|\Csf_j|}{\lambda_j} (e^{\lambda_j t} -1)\\
&\leq C_2\delta F_\sN(t).
\end{split}
\end{equation}

\subsection{The difference function}
Still let  $(v,r)=(u, p)$ or $(\widetilde u,\widetilde p)$. Denote 
\[
v^d =v^\delta - \delta v^\sN, \quad r^d= r^\delta-\delta r^\sN,
\]
that  solve the boundary value problem
\begin{equation}\label{EqDiff}
\begin{cases}
\d_t v^d + \nabla r^d - \mu \Delta v^d = -v^\delta \cdot \nabla v^\delta,&\quad\text{in }\Omega,\\
\text{div}v^d=0 &\quad\text{in }\Omega,\\
v_2^d =0 &\quad\text{on } \Sigma_+ \cup \Sigma_-,\\
\mu \d_2 v_1^d= \xi_+ v_1^d &\quad\text{on }\Sigma_+,\\
\mu\d_2 v_1^d=- \xi_- v_1^d &\quad\text{on }\Sigma_-,
\end{cases}
\end{equation}
with the initial datum $v^d(0)=0$.

Multiplying \eqref{EqDiff}$_1$ by $v^d$, we obtain
\begin{equation*}\begin{split}
\frac12 \frac{d}{dt} \int_\Omega |v^d|^2 & = \int_{2\pi L\T} \big( \xi_+|v_1^1(x_1,1)|^2 + \xi_-  |u_1^1(x,-1)|^2 \big) dx_1     -\mu \int_\Omega |\nabla v^d|^2  - \int_\Omega ( v^\delta \cdot \nabla v^\delta) \cdot v^d.
\end{split}\end{equation*}
Notice that
\begin{equation*}
\int v^\delta \cdot \nabla v^\delta \cdot v^d  \le \|u^\delta \cdot \nabla v^\delta\|_{L^2} \|v^d\|_{L^2} \le C\|v^\delta\|_{H^2}^2 \|v^d\|_{L^2},
\end{equation*}
and that (see \cite[Proposition 4.2]{DLX18})
\begin{equation*}
\int_{2\pi L\T} \big( \xi_+|v_1^1(x_1,1)|^2 + \xi_-  |v_1^1(x,-1)|^2 \big) dx_1   -\mu \int_\Omega |\nabla v^d|^2   \le \Lambda \int |v^d|^2,
\end{equation*}
Combining the previous inequalities  gives us that
\begin{equation*}
\frac{d}{dt} \|v^d\|_{L^2}  \le \Lambda \|v^d\|_{L^2} + C  \|v^\delta\|_{H^2}^2.
\end{equation*}
Making use of the preceding inequality with \eqref{H2bound-V}, we obtain 
\begin{equation*}
\frac{d}{dt} \|v^d\|_{L^2}  \le \Lambda \|v^d\|_{L^2} + C \delta^2 F_\sN^2(t).
\end{equation*}
Thus, using the condition $2\lambda_j - \Lambda > 0$,  it follows from the Gronwall inequality that
\begin{equation}\label{Bound-Ud}
\begin{split}
\|u^d(t)\|_{L^2}  &\le C \delta^2 e^{\Lambda t} \int_0^t e^{-\Lambda s}F_\sN(2s) ds
\notag\\
&\le  C \sum_{j=1}^\sN \delta^2  e^{\Lambda t} \int_0^t e^{(2\lambda_j- \Lambda)s} ds \\
&\leq C_3 \delta^2 F_\sN^2(t).
\end{split}
\end{equation}

\subsection{Proof of Theorem \ref{ThmNonlinear}}

We are in position to prove the main theorem. Note that (due to \eqref{Ordering})
\[
\varepsilon_0 = \delta \sum_{j=1}^\sN |\Csf_j| e^{\lambda_j T^\delta} \leq C_4 \delta  |\Csf_\sN| e^{\lambda_\sN T^\delta}.
\] 
Thus, we  us show that
\begin{equation}  
T^\delta = \min\{ T^\star, T^{\star\star}, T^\delta\}, \quad\text{provided that  }\varepsilon_0< \min\big\{ \frac{C_1\delta_0}{2C_2}, \frac{C_1}{C_3}, \frac1{4C_3C_4} \big\}.
\end{equation}
In fact, if $T^\star< T^\delta$, we have (due to \eqref{H2bound-V})
\[
\|(u^\delta,\widetilde u^\delta)(T^\delta)\|_{H^2}\leq 2C_3 \delta F_\sN(T^\delta)  =2C_2 \varepsilon_0 < C_1 \delta_0.
\]
If $T^{\star\star}<T^\delta$, we have (due to \eqref{Bound-Ud})
\[
\begin{split}
\|(u^\delta,\widetilde u^\delta)(T^\delta)\|_{L^2} &\leq \delta \| (u^\sN, \widetilde u^\sN)(T^\delta)\|_{L^2} + \| (u^d, \widetilde u^d)(T^\delta)\|_{L^2} \\
&\leq 2C_1 \delta F_\sN(T^\delta)+ C_3 \delta^2 F_\sN^2(T^\delta) \\
&<3C_1 \delta F_\sN(T^\delta).
\end{split}
\]
Those inequalities contradict to the definition of $T^\star$ \eqref{T-star} and of $T^{\star\star}$ \eqref{T-star2}. That implies $T^\delta = \min\{ T^\star, T^{\star\star}, T^\delta\}$.

 Once we have that $T^\delta \leq \min\{T^\star, T^{\star\star}\}$, we deduce 
\begin{equation}
\begin{split}
\| u^\delta(T^\delta)- \widetilde u^\delta(T^\delta)\|_{L^2} &\geq \delta \|u^\sN(T^\delta)- \widetilde u^\sN(T^\delta)\|_{L^2} - \|u^d(T^\delta)\|_{L^2} -\|\widetilde u^d(T^\delta)\|_{L^2}  \\ 
&\geq  \delta |\Csf_\sN| e^{\lambda_\sN T^\delta} -2 C_3 \delta^2 F_\sN^2(T^\delta) \\
&\geq \frac1{C_4} \varepsilon_0 -2 C_3 \varepsilon_0^2 \geq \frac1{2C_4} \varepsilon_0. 
\end{split}
\end{equation}
Theorem \ref{ThmNonlinear} thus follows.

\section*{Acknowledgements}

\end{document}